\newtheorem{theorem}{Theorem}[section]
\newtheorem{corollary}[theorem]{Corollary}
\newcommand{\bF}{\mathbb F}
\newcommand{\ex}{ex}
\newcommand{\rank}{rank}
\newcommand{\PG}{PG}
\newcommand{\AG}{AG}
\newcommand{\GF}{GF}
\newcommand{\G}{G}
\newcommand{\del}{\setminus}
\begin{document}

\sloppy

\title[The Erd\H os-Stone theorem for finite geometries]{An analogue of
the Erd\H os-Stone theorem for finite geometries}

\author[Geelen]{Jim Geelen}
\address{Department of Combinatorics and Optimization,
University of Waterloo, Waterloo, Canada}
%\email{jfgeelen@math.uwaterloo.ca}
%\thanks{ This research was partially supported by a grant from the
%Office of Naval Research [N00014-10-1-0851].}

\author[Nelson]{Peter Nelson}
\address{School of Mathematics, Statistics and Operations Research,
Victoria University, Wellington, New Zealand}

\subjclass{05B35}
\keywords{matroids, Erd\H os-Stone Theorem,
Bose-Burton Theorem, Hales-Jewett Theorem, projective geometries,
critical exponent}
\date{\today}

\begin{abstract}
For a set $G$ of points in $\PG(m-1,q)$, let $\ex_q(G;n)$
denote the maximum size of a collection of points in 
$\PG(n-1,q)$ not containing a copy of $G$,
up to projective equivalence.  We show that 
$$
\lim_{n\rightarrow \infty} \frac{ \ex_q(G;n) }{|\PG(n-1,q)|} 
= 1-q^{1-c},$$
where $c$ is the smallest integer such that there is a
rank-$(m-c)$ flat in $\PG(m-1,q)$ that is disjoint from $G$.
The result is an elementary application of the density version of 
the Hales-Jewett Theorem.
\end{abstract}

\maketitle

\section{Introduction}

Note that if $M$ is a rank-$(r-c+1)$ flat of $\PG(r-1,q)$, then 
$|M| = \frac{q^{r-c+1}-1}{q-1}$ and each rank-$m$ flat of $\PG(r-1,q)$
intersects $M$ in a flat of rank at least $m-c+1$.
Our main result is the following:
\begin{theorem}[Main Theorem]\label{main}
For each prime-power $q$, all integers $m> c\ge 0$, and
any real number $\epsilon>0$,  there is an integer
$R=R_{\ref{main}}(m,q,c,\epsilon)$ such that,
if $n>R$ and $G$ is a set of points in $\PG(n-1,q)$ with
$|G|\le  (1-\epsilon)\left(\frac{q^{n-c+1}-1}{q-1}\right)$,
then there exists a rank-$m$ flat $F$ of $\PG(n-1,q)$
such that $\rank(F\cap G)\le m-c$.
\end{theorem}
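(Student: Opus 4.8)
The plan is to pass to the complement $C=\PG(n-1,q)\del G$ and to show that $C$ contains a set of the form $F\del H$, where $F$ is a rank-$m$ flat and $H$ a rank-$(m-c)$ subflat of $F$. This suffices: if $F\del H\subseteq C$ then $F\cap G\subseteq H$, so $\rank(F\cap G)\le m-c$; conversely the conclusion gives such a set on taking $H$ to be the span of $F\cap G$, padded to rank $m-c$. I would argue by induction on $c$ (uniformly over $m>c$). For $c=0$ there is nothing to prove. For $c=1$ the set $F\del H$ is just an affine space $\AG(m-1,q)$; choosing a hyperplane $H_\infty$ to minimise $|C\cap H_\infty|$ forces $|C\cap(\PG(n-1,q)\del H_\infty)|\ge |C|\cdot\frac{q^{n-1}(q-1)}{q^{n}-1}$, which is at least a fixed positive fraction of $q^{n-1}=|\AG(n-1,q)|$ (here $|C|$ is a positive-constant fraction of $|\PG(n-1,q)|$). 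Hence for $n$ large the density Hales--Jewett theorem yields a combinatorial $(m-1)$-subspace inside $C\cap\AG(n-1,q)$; its projective closure $F$ satisfies $F\cap G\subseteq F\cap H_\infty$, a hyperplane of $F$, so $\rank(F\cap G)\le m-1$.

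For the inductive step I would use the decomposition $F\del H=(F\del H'')\sqcup(H''\del H)$, where $H''$ is the hyperplane of $F$ through $H$: the first piece is the affine part of $F$, and the second is an instance of the rank-$(m-1)$, $(c-1)$ version of the problem inside $H''$. Accordingly, use the density Hales--Jewett theorem to find, inside $C\cap\AG(n-1,q)$ for a suitable $H_\infty$, a combinatorial subspace $A$ of very large dimension $D$ (legitimate, since $n$ may be taken as large as desired relative to $D$), and let $D^{*}\subseteq H_\infty$ be its rank-$D$ direction flat, so $A=\overline A\del D^{*}\subseteq C$. Every affine $(m-1)$-flat of $A$ has its direction flat equal to an arbitrary rank-$(m-1)$ subflat of $D^{*}$, so it is enough to find a rank-$(m-1)$ flat $H''\subseteq D^{*}$ with $\rank(H''\cap G)\le m-c=(m-1)-(c-1)$ — exactly the rank-$(m-1)$, $(c-1)$ case of the theorem applied in $D^{*}\cong\PG(D-1,q)$ to the point set $G\cap D^{*}$. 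Given such an $H''$, any affine $(m-1)$-flat of $A$ with direction flat $H''$ has projective closure a rank-$m$ flat $F$ with $F\cap H_\infty=H''$ and $F\del H_\infty\subseteq A\subseteq C$; taking $H\subseteq H''$ the span of $H''\cap G$ padded to rank $m-c$ gives $F\del H\subseteq(F\del H_\infty)\cup(H''\cap C)\subseteq C$, as required.

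The main obstacle is making the inductive hypothesis genuinely applicable to $(D^{*},G\cap D^{*})$. The large-dimension requirement is harmless, since $D$ can be fixed before $n$. The delicate point is that one needs $|G\cap D^{*}|$ below the next-level threshold $(1-\epsilon')\tfrac{q^{D-c+2}-1}{q-1}$, and the naive bound $|G\cap D^{*}|\le|G|$ is far too weak; $G$ could in principle concentrate on $D^{*}$. I therefore expect the real work to be in arranging the affine data — the hyperplane $H_\infty$, and the subcube of $\AG(n-1,q)$ to which Hales--Jewett is applied — so that the resulting direction flat $D^{*}$ meets $G$ in a controlled, sub-threshold set: one should first pass, by an averaging argument, to a coordinate subcube whose span projectivises to a flat on which $G$ is suitably sparse, and only then invoke Hales--Jewett inside it (this is also where one must be careful that the subcube is still a positive-density piece of $C$). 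Finally, each of the $c$ levels of the induction can afford to lose only a constant factor in $\epsilon$, and the accumulated loss together with the Hales--Jewett bounds is absorbed into the definition of $R_{\ref{main}}(m,q,c,\epsilon)$; tracking these constants is routine bookkeeping rather than a conceptual difficulty.
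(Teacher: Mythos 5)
Your overall skeleton---induction on $c$, with the density Hales--Jewett theorem supplying both the base case and the ``affine layer'' of the inductive step---is sound, and you have correctly located the crux: the inductive hypothesis has to be applied on the direction flat $D^{*}$ of the combinatorial subspace that DHJ produces, and nothing bounds $|G\cap D^{*}|$. However, the fix you sketch does not close this gap. Passing first to a coordinate subcube on whose projectivised span $G$ is sparse does not help, because DHJ gives no control over \emph{where} inside that subcube the combinatorial subspace lands: its direction flat is an arbitrary rank-$D$ subflat of the relevant hyperplane at infinity, and $G$ may concentrate there by more than the factor of roughly $q$ of slack separating the density $q^{1-c}$ of $G$ from the threshold $q^{2-c}$ of the $(c-1)$-level hypothesis. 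A supersaturation argument (averaging over all rank-$D$ combinatorial subspaces contained in $C$) also fails, since the fraction of such subspaces lying in $C$ is only bounded below by a constant depending on $\epsilon$, $q$, $c$ and $D$, and dividing by that constant destroys the factor-$q$ margin. So this is a genuine missing idea, not routine bookkeeping.

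The paper avoids the difficulty by running your two steps in the opposite order. It applies the inductive hypothesis not to $G$ restricted to a small flat, but to the dense complement (the set $M$ in the complementary formulation) inside all of $\PG(n-1,q)$, where the density hypothesis is automatic: $1-q^{1-c}+\epsilon$ exceeds the $(c-1)$-level extremal density, since $1-q^{1-c}+\epsilon>1-q^{2-c}+(q^{2-c}-q^{1-c})$. This yields flats $F_0\subseteq F_1$ of ranks $r-c+1$ and $r$ (with $r$ a DHJ-sized constant) such that $F_1\del F_0\subseteq M$. Only afterwards is DHJ invoked---not on an affine piece of the ambient space, but on the set $Q_1$ of points $f$ of an affine geometry $Q\cong\AG(r-c+1,q)$, transverse to $F_0$ inside a rank-$(r+1)$ flat $F_2\supseteq F_1$ chosen by averaging, whose entire fibre $(F_0^c+f)\del F_1$ lies in $M$. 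The density of $Q_1$ is established by a direct count, so the only ``sparsity on a subflat'' ever needed is proved rather than hoped for. To rescue your ordering you would need a density Hales--Jewett theorem with prescribed, or at least constrained, direction flat, which is a substantially stronger statement than Theorem~\ref{mdhjthm}.
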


We were motivated by a problem in extremal matroid theory
posed by Kung~[\ref{kung}]; the matroidal origins
of the problem are reflected in our terminology which 
we briefly review below.

Let $\bF$ be a finite field of order $q$ and let $V$
be a rank-$r$ vector space over $\bF$.
A {\em rank-$k$ flat} of $\PG(r-1,\bF)$ is
a $(k+1)$-dimensional subspace of $V$;
the {\em points} are the rank-$1$ flats;
the {\em lines} are the rank-$2$ flats; and
the {\em hyperplanes} are the rank-$(r-1)$ flats.
Technically the projective geometry depends on the particular 
vector space $V$; to make this explicit, we write
$\PG(V)$ for the projective geometry given by $V$.

We refer to a set $H$ of points in $\PG(r-1,\bF)$, for some $r$,
as a {\em geometry over $\bF$} and we define 
$\rank(H)$ to be the rank of the flat spanned by $H$.
If $H$ and $G$ are geometries over $\bF$, then 
there are vector spaces $V_1$ and $V_2$ over $\bF$
so that $H$ is a spanning set of points in $\PG(V_1)$ and
$G$ is  a spanning set of points in $\PG(V_2)$.
We say that {\em $H$ is a restriction of $G$}
or that {\em $G$ contains $H$}, if
there is a rank-preserving projective transformation
from $V_2$ to a vector space $V'_2$ containing $V_1$
so that $H$ is contained in the image of $G$.

For a geometry $H$ over $\bF$ and positive integer $n$,
we let $\ex_q(H;n)$ denote the maximum number of points
in a rank-$n$ geometry over $\bF$ not containing $H$.

For integers $0\le c \le m$,
let $F$ be a rank-$(m-c)$ flat of $\PG(m-1,q)$ and let
$\G(m-1,q,c)$ be the geometry obtained by restricting 
$\PG(m-1,q)$ to the complement of $F$; thus
$\G(m-1,q,m) = \PG(m-1,q)$ and $\G(m-1,q,1) = \AG(m-1,q)$, the rank-$m$ affine geometry over $GF(q)$.
The {\em critical exponent} of $H$ over $\GF(q)$, written $c(H;q)$, is
the minimum $c$ such that $H$ is contained in $\G(r(M)-1,q,c)$. 
%If $H$ is contained in
%$\G(r(M)-1,q,c)$ but not in $\G(r(M)-1,q,c-1)$,
%then we say that $H$ has {\em critical exponent} $c$ and let
%$c(H;q)=c$.  
The critical exponent was introduced by
Crapo and Rota~[\ref{cr}] and is related to the chromatic number 
of a graph. 

The following result, which is an easy corollary of 
Theorem~\ref{main}, was all but conjectured by Kung~[\ref{kung}].
\begin{theorem}\label{cor}
Let $\bF$ be a finite field of order $q$. If $H$ is a geometry
 over $\bF$ with with critical exponent $c>0$, then
$$ \lim_{n\rightarrow \infty} \frac{ \ex_q(H;n) }{|\PG(n-1,q)|} 
= 1-q^{1-c}.$$
\end{theorem}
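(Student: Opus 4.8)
The plan is to prove the two matching bounds
\[
\liminf_{n\to\infty}\frac{\ex_q(H;n)}{|\PG(n-1,q)|}\ \ge\ 1-q^{1-c}
\qquad\text{and}\qquad
\limsup_{n\to\infty}\frac{\ex_q(H;n)}{|\PG(n-1,q)|}\ \le\ 1-q^{1-c};
\]
together these show that the limit exists and equals $1-q^{1-c}$. Throughout I write $|\PG(n-1,q)|=\frac{q^n-1}{q-1}$ and use repeatedly that a rank-$k$ flat of $\PG(n-1,q)$, together with its induced set of points, is a copy of $\PG(k-1,q)$.

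For the lower bound (which is trivial when $c=1$, since then $1-q^{1-c}=0$ and the ratios are nonnegative) I would take, for each $n$, the rank-$n$ geometry $\G(n-1,q,c-1)$ obtained by deleting a rank-$(n-c+1)$ flat from $\PG(n-1,q)$. It has exactly $\frac{q^n-q^{n-c+1}}{q-1}$ points, and $\frac{1}{|\PG(n-1,q)|}\cdot\frac{q^n-q^{n-c+1}}{q-1}\to 1-q^{1-c}$, so it remains only to check that $\G(n-1,q,c-1)$ contains no copy of $H$. If it did, then, writing $W$ for the flat spanned by an embedded copy of $H$ and $Z$ for the deleted rank-$(n-c+1)$ flat, submodularity of rank gives $\rank(W\cap Z)\ge\rank(H)-c+1$, so the embedded copy of $H$ lies in $W$ with a rank-$(\rank(H)-c+1)$ subflat removed; thus $H$ would be a restriction of $\G(\rank(H)-1,q,c-1)$, contradicting the minimality in the definition of $c=c(H;q)$. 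Hence $\ex_q(H;n)\ge\frac{q^n-q^{n-c+1}}{q-1}$, which gives the first bound.

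For the upper bound I would fix a small $\delta>0$ and show that, for all sufficiently large $n$, every rank-$n$ geometry $G$ over $\bF$ with $|G|>(1-q^{1-c}+\delta)\,|\PG(n-1,q)|$ contains $H$; this gives $\ex_q(H;n)\le(1-q^{1-c}+\delta)\,|\PG(n-1,q)|$, and the second bound follows on letting $\delta\to0$. Regard such a $G$ as a spanning set of points of $\PG(n-1,q)$ and put $\overline G=\PG(n-1,q)\setminus G$. Set $m:=\max(\rank(H),c+1)$, so that $m>c$ and, by the definition of the critical exponent together with the routine fact that being a restriction of $\G(k-1,q,c)$ is preserved when $k$ is increased, $H$ is a restriction of $\G(m-1,q,c)$. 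Since $\frac{1}{|\PG(n-1,q)|}\cdot\frac{q^{n-c+1}-1}{q-1}\to q^{1-c}$, I can choose $\epsilon>0$ depending only on $q,c,\delta$ (any $\epsilon<\delta q^{c-1}$ works) so that, for $n$ large, the hypothesis $|G|>(1-q^{1-c}+\delta)\,|\PG(n-1,q)|$ forces $|\overline G|\le(1-\epsilon)\frac{q^{n-c+1}-1}{q-1}$. Now apply Theorem~\ref{main} with parameters $m,q,c,\epsilon$ to the point set $\overline G$ (playing the role of ``$G$'' there): once $n>R_{\ref{main}}(m,q,c,\epsilon)$, there is a rank-$m$ flat $F$ of $\PG(n-1,q)$ with $\rank(F\cap\overline G)\le m-c$. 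Extending $F\cap\overline G$ to a rank-$(m-c)$ flat $F'$ of $F$, we obtain $F\cap G=F\setminus(F\cap\overline G)\supseteq F\setminus F'$, and $F\setminus F'$ is a copy of $\G(m-1,q,c)$, which contains $H$. Since $F\cap G$ is a restriction of $G$ and the relation ``contains'' is transitive, $G$ contains $H$, as required.

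The only substantive ingredient is Theorem~\ref{main}; granting that, everything else is bookkeeping. I expect the step that needs the most care to be the density translation in the last paragraph --- passing from the hypothesis on $|G|$ to the hypothesis of Theorem~\ref{main} on $|\overline G|$, and controlling how $\epsilon$ and the threshold on $n$ depend on $\delta$ --- together with making precise, in terms of the paper's definition of ``restriction'' via embeddings of vector spaces, the two auxiliary facts used above: that ``contains'' is transitive, and that if $H$ is a restriction of $\G(k-1,q,c)$ for one admissible value of $k$ then it is a restriction of $\G(k'-1,q,c)$ for every $k'\ge k$. Neither of these is hard, but each deserves a careful sentence.
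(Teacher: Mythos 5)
Your proposal is correct and follows essentially the same route as the paper: the lower bound comes from the extremal example $\G(n-1,q,c-1)$ (whose $H$-freeness you justify, via submodularity, where the paper merely asserts it), and the upper bound comes from the main theorem, with your complementation and $\epsilon$--$\delta$ translation simply carrying out the equivalence between Theorem~\ref{main} and its reformulation Theorem~\ref{main2} that the paper explicitly leaves to the reader. The auxiliary facts you flag (transitivity of containment, and monotonicity of being a restriction of $\G(k-1,q,c)$ in $k$) are indeed routine and check out.
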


This theorem bears a striking resemblance to the following theorem
of Erd\H os and Stone~[\ref{es}].
For a graph $H$, let $\ex(H;n)$ denote the 
maximum number of edges in a simple $n$-vertex
graph that does not contain a subgraph isomorphic to $H$.
The {\em chromatic-number}, $\chi(G)$, of a graph $G$ is the
minimum number of colours needed to colour the vertices so that
no two adjacent vertices get the same colour.
\begin{theorem}[Erd\H os-Stone Theorem]\label{esthm}
For any graph $H$ with chromatic-number $\chi\ge 2$, 
$$ \lim_{n\rightarrow \infty} \frac{\ex(H;n)}{{n\choose 2}} =
1-\frac{1}{\chi-1}.$$
\end{theorem}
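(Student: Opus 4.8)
The plan is to establish the two bounds separately. For the lower bound I would use one explicit construction. As $H$ has chromatic number $\chi$, it is not properly $(\chi-1)$-colourable, and so is not a subgraph of any complete $(\chi-1)$-partite graph; in particular the balanced complete $(\chi-1)$-partite graph $T_{\chi-1}(n)$ on $n$ vertices is $H$-free. A direct count shows it has $\left(1-\frac{1}{\chi-1}\right)\binom{n}{2}+o(n^2)$ edges, whence $\liminf_{n\to\infty}\ex(H;n)/\binom{n}{2}\ge 1-\frac{1}{\chi-1}$.

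The substance of the theorem is the matching upper bound, and here I would first reduce to a statement about complete multipartite graphs. Write $t=|V(H)|$ and let $K_r(t)$ denote the complete $r$-partite graph with every part of size $t$. A proper $\chi$-colouring of $H$ embeds $H$ into $K_\chi(t)$, by placing each colour class into a distinct part. It therefore suffices to prove that for every $\epsilon>0$ and all integers $r\ge 2$ and $t\ge 1$ there is an $n_0$ such that every graph on $n\ge n_0$ vertices with at least $\left(1-\frac{1}{r-1}+\epsilon\right)\binom{n}{2}$ edges contains $K_r(t)$: taking $r=\chi$ forces any sufficiently dense graph to contain $H$, so an $H$-free graph has fewer than $\left(1-\frac{1}{\chi-1}+\epsilon\right)\binom{n}{2}$ edges for $n\ge n_0$, giving $\limsup_{n\to\infty}\ex(H;n)/\binom{n}{2}\le 1-\frac{1}{\chi-1}+\epsilon$ for each $\epsilon$, and the theorem follows.

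To prove the embedding statement I would invoke Szemer\'edi's Regularity Lemma with a regularity parameter chosen small in terms of $\epsilon$, $r$ and $t$, obtaining a partition of the vertex set into a bounded number $k$ of nearly equal clusters almost all of whose pairs are $\epsilon$-regular. I would then form the reduced graph $R$ on these $k$ clusters, joining two of them precisely when the pair between them is $\epsilon$-regular of density at least a fixed constant $d>0$. The edges discarded in passing to $R$ --- those inside a cluster, those across an irregular pair, and those across a sparse pair --- together make up at most an $\frac{\epsilon}{2}$-fraction of all pairs, so $R$ has more than $\left(1-\frac{1}{r-1}\right)\binom{k}{2}$ edges; by Tur\'an's theorem $R$ then contains a clique on $r$ vertices. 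The corresponding $r$ clusters are pairwise $\epsilon$-regular of density at least $d$, and the Embedding Lemma for regular pairs yields a copy of $K_r(t)$ inside them provided the clusters are large, which holds once $n\ge n_0$.

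I expect the main obstacle to be the Embedding Lemma: one must show that $r$ pairwise $\epsilon$-regular pairs of density at least $d$ contain $K_r(t)$, which is proved by choosing the parts one vertex at a time while keeping the common neighbourhood in each not-yet-completed cluster large, and this is what forces the regularity parameter to be small relative to $d$ and $t$. Some care is also needed in the counting to confirm that the discarded pairs cost at most $\frac{\epsilon}{2}$ in density and that the Tur\'an threshold is crossed strictly rather than merely met. A classical route that avoids the Regularity Lemma entirely is the original induction of Erd\H os and Stone on $r$, with the K\H ov\'ari--S\'os--Tur\'an bound for $K_{t,t}$ serving as the base case; this is more elementary but demands a delicate degree-pruning argument at the inductive step.
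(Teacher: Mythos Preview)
The paper does not give its own proof of the Erd\H{o}s--Stone Theorem; the statement is quoted purely as motivation and attributed to the original reference. So there is nothing in the paper to compare your argument against.

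That said, your outline is mathematically sound and is the standard modern proof: the lower bound from the Tur\'an graph $T_{\chi-1}(n)$ is correct, and the upper bound via the Regularity Lemma --- pass to the reduced graph, apply Tur\'an's theorem there to find a $K_r$, then blow it up using the embedding (or counting) lemma --- is exactly how the result is now usually presented. Your identification of the delicate points (calibrating the regularity parameter against $d$ and $t$, and checking the discarded edges do not push the reduced-graph density below the Tur\'an threshold) is accurate. One small historical remark: the original 1946 argument of Erd\H{o}s and Stone predates both the Regularity Lemma and the K\H{o}v\'ari--S\'os--Tur\'an theorem; their induction on $r$ handled the bipartite base case directly by a degree-averaging argument rather than by quoting K\H{o}v\'ari--S\'os--Tur\'an, though the spirit is the same.
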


\section{old results}

In this section we briefly review related results.
Note that $\G(n-1,q,m-1)$ does not contain $\PG(m-1,q)$;
Bose and Burton~[\ref{bb}] showed that $\G(n-1,q,m-1)$
is extremal among geometries not containing $\PG(m-1,q)$. 
\begin{theorem}\label{bbthm}
Let $\bF$ be a field of order $q$ and $m$ and $r$ be integers 
with $n\ge m\ge 0$. Then
$$ \ex_q(\PG(m-1,\bF);\, n) = |\G(n-1,q,m-1)|.$$
\end{theorem}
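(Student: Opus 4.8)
The plan is to prove matching lower and upper bounds for $\ex_q(\PG(m-1,\bF);n)$; throughout we regard a rank-$n$ geometry over $\bF$ as a spanning set of points of $\PG(n-1,q)$, and we may assume $m\ge 1$.

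For the lower bound, recall that $\G(n-1,q,m-1)$ is obtained from $\PG(n-1,q)$ by deleting the points of some rank-$(n-m+1)$ flat $M$, so $|\G(n-1,q,m-1)| = \frac{q^n-1}{q-1} - \frac{q^{n-m+1}-1}{q-1}$. A copy of $\PG(m-1,q)$ inside $\G(n-1,q,m-1)$ would be a restriction of rank $m$ with $\frac{q^m-1}{q-1}$ points, hence (having exactly as many points as the flat it spans) the entire point set of some rank-$m$ flat $F$; but then $F\cap M=\emptyset$, contradicting the observation recorded before Theorem~\ref{main} that a rank-$m$ flat and a rank-$(n-m+1)$ flat of $\PG(n-1,q)$ meet --- equivalently, $\rank(F\wedge M)\ge \rank(F)+\rank(M)-n = 1$ by modularity. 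Thus $\G(n-1,q,m-1)$ contains no copy of $\PG(m-1,q)$, which gives ``$\ge$''.

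The same description of copies of $\PG(m-1,q)$ shows that a rank-$n$ geometry $N$ contains no copy of $\PG(m-1,q)$ if and only if no rank-$m$ flat of $\PG(n-1,q)$ is contained in $N$, i.e.\ if and only if the complementary point set $X := \PG(n-1,q)\del N$ meets every rank-$m$ flat. So for ``$\le$'' it suffices to prove: if $X\subseteq \PG(n-1,q)$ meets every rank-$m$ flat, then $|X|\ge \frac{q^{n-m+1}-1}{q-1}$, since then $|N| = \frac{q^n-1}{q-1}-|X| \le |\G(n-1,q,m-1)|$. I would prove this by induction on $n$, the case $n=1$ being immediate. For the inductive step, if $X=\PG(n-1,q)$ there is nothing to prove; otherwise pick a point $p\notin X$, and note that then $m\ge 2$ (a point set meeting every rank-$1$ flat is everything). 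Projecting from $p$ gives $\PG(n-1,q)/p \cong \PG(n-2,q)$; the image $\overline X$ of $X$ satisfies $|\overline X|\le |X|$, and since $p\notin X$ each rank-$m$ flat through $p$ meets $X$ in a point other than $p$, so $\overline X$ meets every rank-$(m-1)$ flat of $\PG(n-2,q)$. By induction $|\overline X|\ge \frac{q^{(n-1)-(m-1)+1}-1}{q-1} = \frac{q^{n-m+1}-1}{q-1}$, and we are done.

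The one point needing care is the translation in the third paragraph from the matroidal hypothesis ``$N$ contains no copy of $\PG(m-1,q)$'' (phrased via restrictions and rank-preserving transformations) to the geometric statement ``$X$ meets every rank-$m$ flat of $\PG(n-1,q)$''; once that is in hand, the lower bound is a one-line modular-lattice computation and the upper bound is the short projection induction above, so I do not anticipate a genuine obstacle.
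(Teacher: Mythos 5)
Your proof is correct. Note first that the paper itself offers no proof of Theorem~\ref{bbthm}: it is stated as a known result and attributed to Bose and Burton~[2], so there is no in-paper argument to compare yours against. What you supply is the standard self-contained proof. The translation step you flag is handled correctly: since a rank-$m$ subset of $\PG(n-1,q)$ with $\frac{q^m-1}{q-1}$ points must equal the rank-$m$ flat it spans, ``$N$ contains $\PG(m-1,q)$'' is exactly ``some rank-$m$ flat lies inside $N$'', so the complement $X$ must be a blocking set for rank-$m$ flats. The lower bound is the modularity computation $\rank(F\cap M)\ge m+(n-m+1)-n=1$, matching the observation recorded before Theorem~\ref{main}, and the upper bound is the projection induction: for $p\notin X$ the quotient $\PG(n-1,q)/p\cong\PG(n-2,q)$ carries the image $\overline X$ with $|\overline X|\le |X|$, and $\overline X$ blocks every rank-$(m-1)$ flat because each rank-$m$ flat through $p$ meets $X$ in a point other than $p$; the base of the induction and the degenerate cases ($m=1$ forces $X=\PG(n-1,q)$, and $m\ge 2$ once a point is omitted) are all checked. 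One cosmetic remark: your argument proves only the numerical value of $\ex_q(\PG(m-1,\bF);n)$, not the characterization of the extremal configurations that Bose and Burton also establish, but the theorem as stated asks only for the former, so nothing is missing.
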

Bonin and Qin~[\ref{bq}] determine $\ex_q(H;n)$ exactly for
several other interesting families of geometries.

Our main result is an easy application of the
following deep result due to Furstenberg and Katznelson~[\ref{fur}, Theorem 9.10] in 1985.
\begin{theorem}\label{mdhjthm}
For each field $\bF$ of order $q$, integer $ m\ge 2$, and
real number $\epsilon > 0$,  there is an integer
$R=R_{\ref{mdhjthm}}(m,q,\epsilon)$ such that,
$$ \ex_q(\AG(m-1,\bF);\, n) < \epsilon |\PG(n-1,q)|$$
for all $n> R$.
\end{theorem}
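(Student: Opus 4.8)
The plan is to recognize Theorem~\ref{mdhjthm} as a restatement, in the language of affine geometries, of the density Hales--Jewett theorem of Furstenberg and Katznelson. That theorem asserts: for each alphabet size $q$, each integer $d\ge 1$, and each $\delta>0$, there is an integer $N=N(q,d,\delta)$ so that for every $n\ge N$, any subset of $[q]^n$ of size at least $\delta q^n$ contains a $d$-dimensional combinatorial subspace. (The ``combinatorial line'' case $d=1$ is the form usually quoted; the general case follows by passing to the product alphabet $[q]^d$.) First I would fix a bijection between $[q]$ and the field $\bF$, so that $[q]^n$ becomes the vector space $\bF^n$, i.e.\ the point set of $\AG(n,\bF)$. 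The key observation is that a $d$-dimensional combinatorial subspace, specified by disjoint nonempty wildcard blocks $S_1,\dots,S_d\subseteq[n]$ and a fixed part $c\in\bF^n$, is exactly the set $\{\, c+\sum_{i=1}^{d}\lambda_i e_{S_i} : (\lambda_1,\dots,\lambda_d)\in\bF^d \,\}$, where $e_{S_i}$ denotes the indicator vector of $S_i$; since these indicator vectors have disjoint supports they are linearly independent, and the set is therefore a $d$-dimensional affine subspace of $\bF^n$, that is, a copy of $\AG(d,\bF)$.

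With this dictionary the argument is short. Suppose $G$ is a rank-$n$ geometry over $\bF$, realised as a set of points in $\PG(n-1,q)$, with $|G|\ge\epsilon\,|\PG(n-1,q)|$; I will show that $G$ contains $\AG(m-1,\bF)$ once $n$ is large, which is the required contrapositive. Choosing a hyperplane $H$ ``at infinity'' identifies the complement $\PG(n-1,q)\del H$ with the affine space $\bF^{n-1}$. Each point of $\PG(n-1,q)$ lies outside exactly $q^{n-1}$ of the $|\PG(n-1,q)|$ hyperplanes, so averaging over all hyperplanes gives $\frac{1}{|\PG(n-1,q)|}\sum_H |G\del H| = |G|\,q^{n-1}/|\PG(n-1,q)| \ge \epsilon q^{n-1}$; hence some hyperplane $H$ satisfies $|G\del H|\ge\epsilon q^{n-1}$. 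Set $A:=G\del H$, a subset of $\bF^{n-1}$ of density at least $\epsilon$. Applying the density Hales--Jewett theorem with $d=m-1$ and $\delta=\epsilon$, provided $n-1\ge N(q,m-1,\epsilon)$ the set $A$ contains an $(m-1)$-dimensional affine subspace of $\bF^{n-1}$, namely a copy of $\AG(m-1,\bF)$. As $A\subseteq G$, this copy is a restriction of $G$. Taking $R_{\ref{mdhjthm}}(m,q,\epsilon):=N(q,m-1,\epsilon)+1$ then proves the theorem.

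I expect the only real obstacle to be conceptual bookkeeping rather than mathematical difficulty, since essentially all the content is carried by the cited theorem. The one point genuinely needing care is the identification in the first paragraph: combinatorial subspaces are defined purely set-theoretically, and one must check that they coincide with affine subspaces after $[q]$ is identified with $\bF$, and that only a \emph{subclass} of affine subspaces arises this way (which is harmless, as we need existence only). Secondary points are that the multidimensional form of density Hales--Jewett is indeed available from the line version, and that the projective-to-affine averaging is valid; both are elementary.
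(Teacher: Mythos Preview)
The paper does not actually give a proof of this theorem: it is quoted as a known result of Furstenberg and Katznelson, with the remark that it ``can be obtained as an easy application of the density version of the multidimensional Hales--Jewett theorem.'' Your proposal spells out precisely this derivation, and it is correct; the dictionary between combinatorial subspaces and affine subspaces, the hyperplane-averaging step to pass from projective to affine density, and the final invocation of density Hales--Jewett with $d=m-1$ are all sound. So your approach is exactly the one the paper points to but does not write out.
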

This result can be obtained as an easy application of the
density version of the multidimensional Hales-Jewett theorem, also proved by Furstenberg and Katznelson~[\ref{fk}], in 1991,
using ergodic theory.  An easier proof was later obtained via the
polymath project~[\ref{polymath}].
The ``easier proof'' is still, however,  more than 30 pages long.
Bonin and Qin~[\ref{bq}] have a much simpler proof 
of Theorem~\ref{mdhjthm} in the case that $q=2$.

\section{New results}

We start with a proof of Theorem~\ref{main}; for convenience we
restate it in a complementary form.
(The equivalence between the two statements is easy and
is left to the reader.)
\begin{theorem}[Reformulation of Theorem~\ref{main}]\label{main2}
For any integers $m> c\ge 1$ and
real number $\epsilon>0$,  there is an integer
$R=R_{\ref{main2}}(m,q,c,\epsilon)$ such that,
$$\ex_q( \G(m-1,q,c); n) <  (1-q^{1-c}+\epsilon)|\PG(n-1,q)|,$$
for all $n>R$.
\end{theorem}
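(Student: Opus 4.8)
The plan is to prove Theorem~\ref{main2} by induction on $c$, with Theorem~\ref{mdhjthm} as the engine. For $c=1$ there is nothing to do: $\G(m-1,q,1)=\AG(m-1,q)$ and $1-q^{1-c}=0$, so the statement is verbatim Theorem~\ref{mdhjthm}, and one may take $R_{\ref{main2}}(m,q,1,\epsilon)=R_{\ref{mdhjthm}}(m,q,\epsilon)$.

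For the inductive step, fix $c\ge 2$, assume the result for $c-1$, and let $N\subseteq\PG(n-1,q)$ be a rank-$n$ geometry with $|N|\ge(1-q^{1-c}+\epsilon)|\PG(n-1,q)|$; put $B=\PG(n-1,q)\setminus N$, so $|B|<(q^{1-c}-\epsilon)|\PG(n-1,q)|$. The structural fact I would use is that $\G(m-1,q,c)$, up to projective equivalence, is a $\PG(c-1,q)$'s worth of pairwise disjoint affine flats $\AG(m-c,q)$: concretely, if $F_c\subseteq F_0$ are flats of rank $m-c$ and $m$ in $\PG(n-1,q)$, then $F_0\setminus F_c$ is such a configuration, its fibres being the sets $G\setminus F_c$ as $G$ ranges over the rank-$(m-c+1)$ flats with $F_c\subseteq G\subseteq F_0$. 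Hence, to embed $\G(m-1,q,c)$ in $N$ it is enough to find a rank-$(m-c)$ flat $F_c$ for which the \emph{good directions} --- the rank-$(m-c+1)$ flats $G\supseteq F_c$ with $G\setminus F_c\subseteq N$ --- contain a $\PG(c-1,q)$ in the quotient $\PG(n-1,q)/F_c$ (which has rank $n-m+c$); and by Theorem~\ref{bbthm} this follows once the number of \emph{bad directions}, namely the $G\supseteq F_c$ meeting $B$ off $F_c$, is strictly below $|\PG(n-m,q)|$.

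So the steps are: (i) produce at least one good direction --- equivalently a large affine flat $W\setminus H\subseteq N$ --- which Theorem~\ref{mdhjthm} supplies because $N$ has density bounded away from $0$; (ii) find a copy of $\G(m-2,q,c-1)$ in $N$ sitting in a hyperplane of the prospective flat $F_0$, via the inductive hypothesis; (iii) attach a fresh affine top layer, i.e. a good direction transverse to the copy in (ii), and finish with Theorem~\ref{bbthm}. The step I expect to be the real obstacle is making (i)--(iii) fit together. Theorem~\ref{mdhjthm} hands us an affine flat $W\setminus H\subseteq N$ but gives no control on $N\cap H$ --- a priori $H$ is exactly where the holes of $W$ pile up --- whereas (ii) needs $N$ to have density above $1-q^{2-c}$ inside a large subflat of $H$; dually, the crude bound ``(number of bad directions)$\le|B|$'' exceeds the Bose--Burton threshold $|\PG(n-m,q)|$ by a factor of roughly $q^{m-c}$, so $F_c$ cannot be taken at random: one must force the holes to concentrate into few fibres of the projection from $F_c$. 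This is where the surplus $\epsilon$ of density must be invested; I would first pass to a hyperplane of $\PG(n-1,q)$ on which $N$ is at least as dense as globally (possible by averaging) and locate the affine layer relative to it, then argue --- by an averaging argument over admissible choices of $F_c$, and, when $B$ clusters near a low-rank flat, by a second application of Theorem~\ref{mdhjthm} inside that flat to align $F_c$ with the holes --- that some $F_c$ has few enough bad directions. With such an $F_c$ in hand, (ii) and (iii) go through and Theorem~\ref{bbthm} produces the required $\PG(c-1,q)$, completing the induction; $R_{\ref{main2}}(m,q,c,\epsilon)$ is then assembled from the radii generated along the way.
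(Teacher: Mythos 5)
Your skeleton --- induction on $c$, the base case $c=1$ being verbatim Theorem~\ref{mdhjthm}, and the observation that $\G(m-1,q,c)$ fibres over $\PG(c-1,q)$ into disjoint copies of $\AG(m-c,q)$ --- matches the paper, and the target configuration you identify (a rank-$(m-c)$ flat $F_c$ together with a rank-$c$ flat of the quotient consisting entirely of good directions) is exactly the configuration the paper ultimately produces. But the mechanism you propose for finding it fails, and you have correctly located the failure point without resolving it. To invoke Theorem~\ref{bbthm} in the quotient $\PG(n-1,q)/F_c$ you need the good directions to have density exceeding roughly $1-q^{1-c}$ there. A direction $G$ is good only if all $q^{m-c}$ points of $G\setminus F_c$ lie in $N$; if the hole set $B$ is, say, a random set of density $q^{1-c}-\epsilon$, then for \emph{every} choice of $F_c$ the proportion of surviving directions is about $(1-q^{1-c}+\epsilon)^{q^{m-c}}$, which (since $q^{m-c}\ge 2$ and $\epsilon$ is small) is strictly below the Bose--Burton threshold $1-q^{1-c}$. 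So no averaging over $F_c$, no passage to a denser hyperplane, and no alignment of $F_c$ with a cluster of holes can rescue step (iii): the good directions are generically too sparse for Bose--Burton, and the sentence ``finish with Theorem~\ref{bbthm}'' is a genuine gap rather than a technicality.

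The paper escapes this by assembling the configuration in the opposite order, so that the projective part comes for free and the second application of density Hales--Jewett does the work you assigned to Bose--Burton. Concretely: the inductive hypothesis, applied with the \emph{fixed} error $q^{2-c}-q^{1-c}$ (so that the threshold becomes exactly $1-q^{1-c}$, which $M$ clears), yields a huge copy of $\G(r-1,q,c-1)$ in $M$, i.e.\ flats $F_0\subseteq F_1$ with $\rank(F_1)=r$, $\rank(F_0)=r-c+1$ and $F_1\setminus F_0\subseteq M$, where $r=R_{\ref{mdhjthm}}(m-c+1,q,\epsilon/2)$. An averaging step then finds a rank-$(r+1)$ flat $F_2\supseteq F_1$ with $M$ of density at least $1-q^{1-c}+\epsilon/2$ on $F_2\setminus F_1$. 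The crucial point is that, after fixing a rank-$(c-1)$ flat $F_0^c\subseteq F_1$ disjoint from $F_0$, each fibre $(F_0^c+f)\cap(F_2\setminus F_1)$ over a point $f$ of the affine quotient $Q$ has only $q^{c-1}$ points; hence losing fewer than a $(q^{1-c}-\epsilon/2)$-fraction of $F_2\setminus F_1$ forces at least $\tfrac{\epsilon}{2}q^{r}$ of the $q^{r-c+1}$ fibres to lie entirely in $M$. Theorem~\ref{mdhjthm} applied to this positive-density subset of $Q$ produces an $\AG(m-c,q)$ of full fibres, and the span of this with $F_0^c$ is the desired rank-$m$ flat: its $\PG(c-1,q)$'s worth of good directions is supplied by $F_1\setminus F_0\subseteq M$, not by a density argument in the quotient. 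Replacing your steps (i)--(iii) by this two-stage construction is what is needed to close the argument.
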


\begin{proof}
Let $m> c\ge 1$ be integers and let 
$\epsilon>0$ be a real number.
The proof is by induction on $c$;
the case that $c=1$ follows directly from Theorem~\ref{mdhjthm}.
Assume that $c>1$ and that the result holds for $c-1$.

Let $r = R_{\ref{mdhjthm}}(m-c+1,q,\epsilon/2)$,
let $t$ be sufficiently large so that 
$q^{1-c}(q^r-1) \le \tfrac{\epsilon}{2}(q^n-q^r)$ for all $n > t$,
and define
$$R_{\ref{main2}}(m,q,c,\epsilon)=
\max(t,R_{\ref{main2}}(r,q,c-1,q^{2-c}-q^{1-c})).$$
Now let $n > R_{\ref{main2}}(m,q,c,\epsilon)$ and
let $M$ be a restriction of $\PG(n-1,q)$ with
$|M|\ge  (1-q^{1-c}+\epsilon)|\PG(n-1,q)|$.

By the inductive assumption, $M$ has a $\G(r-1,q,c-1)$-restriction.
Thus there are flats $F_0\subseteq F_1$ of $\PG(n-1,q)$ such that
$\rank(F_1)=r$, $\rank(F_0)=r-c+1$, and $F_1-F_0\subseteq M$.  Let
$F_0^c\subseteq F_1$ be a rank-$(c-1)$ flat that is disjoint from $F_0$.

Note that, by our definition of $t$,
$$|M\del F_1|\ge \left(1-q^{1-c} +\tfrac{\epsilon}{2}\right)
|\PG(n-1,q)-F_1|.$$
So by an elementary averaging argument, 
there exists a rank-$(r+1)$ flat $F_2$ containing 
$F_1$ such that 
$$|M\cap(F_2-F_1)| \ge   \left(1-q^{1-c} +\tfrac{\epsilon}{2}\right)|F_2-F_1| = 
(1-q^{1-c})q^{r} +\tfrac{\epsilon}{2} q^r .$$ 

We want to find a rank-$m$ flat $F\subseteq F_2$
such that $F^c_0\subseteq F\not\subseteq F_1$ and $F-F_1\subseteq M$.
If $F$ satisfies these conditions, then
$\rank(F \cap F_0) = m-c$ and, hence,
the restriction of  $M$ to $F$  contains $\G(m-1,q,c)$.

Let $S= (F_2-F_1)\cap M$.
For a flat $F$ of $\PG(n-1,q)$ and point $e\not\in F$,
we let $F+e$ denote the flat spanned by $F\cup\{e\}$.
Let $e\in F_2-F_1$ and let $Q = (F_0+e)-F_0$.
Now, for each $f\in Q$,
let $S_f = (F_0^c + f)\cap S$.
Note that $(S_f\, : \, f\in Q)$ partitions $S$ and
$|S_f|\le q^{c-1}$.  Finally, let $Q_1$ be the set of all $f\in Q$,
such that $|S_f|= q^{c-1}$.

All  vectors in $Q-Q_1$
extend to at most $q^{c-1}-1$ elements in $S$, so
\begin{eqnarray*}
  (q^{c-1}-1) q^{r-c+1} + |Q_1| &\ge& |S| \\
 &\ge & \left( 1 -q^{1-c}+\tfrac{\epsilon}{2} \right) q^r\\
 &= & ( q^{c-1}-1) q^{r-c+1}+\tfrac{\epsilon}{2}  q^r.
\end{eqnarray*}
Thus $|Q_1|\ge \frac{\epsilon}{2} q^r$.
By Theorem~\ref{mdhjthm},
there is a subset $Q_2$ of $Q_1$ such that 
$Q_2 \cong \AG(m-c, q)$.
Let $F$ be the flat of $\PG(n-1,q)$ 
spanned by $F_0^c$ and $Q_2$.
Thus $F$ has rank $m$, $F_0^c\subseteq F$, and, since $Q_2\subseteq Q_1$,
$F-F_1\subseteq M$.
So the restriction of $M$ to $F-F_0$ gives $\G(m-1,q,c)$.
\end{proof}

We can now prove Theorem~\ref{cor}, which we restate here 
for convenience.
\begin{corollary}
Let $\bF$ be a finite field of order $q$. If $H$ is a geometry 
over $\bF$ with with critical exponent $c>0$, then
$$ \lim_{n\rightarrow \infty} \frac{ \ex_q(H;n) }{|\PG(n-1,q)|} 
= 1-q^{1-c}.$$
\end{corollary}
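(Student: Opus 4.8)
The plan is to sandwich $\ex_q(H;n)$ between two sizes whose ratios to $|\PG(n-1,q)|$ both tend to $1-q^{1-c}$. Write $m=\rank(H)$. Since $H$ is trivially a restriction of $\PG(m-1,q)=\G(m-1,q,m)$, we have $c\le m$; and by the definition of the critical exponent, $H$ is a restriction of $\G(m-1,q,c)$ but is \emph{not} a restriction of $\G(m-1,q,c-1)$.

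For the upper bound I would use that ``is a restriction of'' is transitive, so any rank-$n$ geometry containing $\G(m-1,q,c)$ also contains $H$, whence $\ex_q(H;n)\le \ex_q(\G(m-1,q,c);n)$. If $c<m$, Theorem~\ref{main2} bounds the right side by $(1-q^{1-c}+\epsilon)|\PG(n-1,q)|$ for all sufficiently large $n$. If $c=m$, then $\G(m-1,q,c)=\PG(m-1,q)$, and Theorem~\ref{bbthm} gives $\ex_q(H;n)\le|\G(n-1,q,m-1)|=\tfrac{q^{n}-q^{n-m+1}}{q-1}$, whose ratio to $|\PG(n-1,q)|$ tends to $1-q^{1-m}=1-q^{1-c}$. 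Either way, $\limsup_{n\to\infty}\ex_q(H;n)/|\PG(n-1,q)|\le 1-q^{1-c}$.

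For the lower bound I would exhibit the geometry $\G(n-1,q,c-1)$ itself, which has $\tfrac{q^{n}-q^{n-c+1}}{q-1}$ points, so its ratio to $|\PG(n-1,q)|$ tends to $1-q^{1-c}$; it then suffices to check that $\G(n-1,q,c-1)$ contains no copy of $H$ once $n\ge m$. This last verification is the step I expect to need the most care. Suppose for contradiction that $H$ is a restriction of $\G(n-1,q,c-1)=\PG(n-1,q)\del F_0$ with $\rank(F_0)=n-c+1$; then a copy of $H$ spans a rank-$m$ flat $W$ of $\PG(n-1,q)$, none of whose points lies in $F_0$. The flat $W\cap F_0$ then has rank at least $\rank(W)+\rank(F_0)-n=m-c+1$ and contains no point of $H$, so $H$ avoids some rank-$(m-c+1)$ sub-flat of $W\cap F_0$ inside $\PG(W)$ and is therefore a restriction of $\G(m-1,q,c-1)$, contradicting $c(H;q)=c$. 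Hence $\ex_q(H;n)\ge|\G(n-1,q,c-1)|$, so $\liminf_{n\to\infty}\ex_q(H;n)/|\PG(n-1,q)|\ge 1-q^{1-c}$. Combining the two bounds gives the claimed limit.
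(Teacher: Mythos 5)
Your proof is correct and follows essentially the same route as the paper: the lower bound comes from exhibiting $\G(n-1,q,c-1)$ and checking it contains no copy of $H$ (the rank computation you spell out for $W\cap F_0$ is exactly the remark at the start of the paper's introduction), and the upper bound comes from applying Theorem~\ref{main2} to $\G(m-1,q,c)$, which contains $H$. The only addition is your explicit treatment of the edge case $c=m$ via the Bose--Burton theorem, a detail the paper's one-line proof glosses over.
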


\begin{proof}
Observe that $H$ is a restriction of 
$\G(r(N)-1,q,c)$ but it is not a restriction of
$\G(n-1,q,c-1)$.
Then, by Theorem~\ref{main2}, for
all $\epsilon>0$ and all
sufficiently large $n$,
$$
\frac{q^n}{q^n-1}(1-q^{1-c})=
\frac{|\G(n-1,q,c-1)|}{|\PG(n-1,q)|}\le
 \frac{\ex_q(H;n)}{|\PG(n-1,q)|} \le 1-q^{1-c} +\epsilon,$$
so the result holds.
\end{proof}

The value of $R_{\ref{main2}}(m,q,c,\epsilon)$ provided by Theorem~\ref{main2}
depends on that of $R_{\ref{mdhjthm}}(m,q,\epsilon)$, 
for which the bounds in [\ref{polymath}] are Ackermann-like 
 for all $q > 2$. 
In the binary case, however, the main theorem 
of [\ref{bq}] implies that the relatively small function 
$R_{\ref{mdhjthm}}(m,2,\epsilon) = 2^{m-2}\lceil 1-\log_2 \epsilon\rceil$ 
will satisfy Theorem~\ref{mdhjthm}. From this, 
one can derive from the proof that 
$R_{\ref{main2}}(m,2,c,\epsilon) = T_c(m+d)$ will satisfy Theorem~\ref{main2}, where 
$d = \lceil\log_2\lceil(2-\log_2 \epsilon)\rceil\rceil$, 
and $T_c$ is the tower function recursively defined by $T_0(s) = s$ 
and $T_i(s) = T_{i-1}(2^s)$ for all $i > 0$.

\section*{References}

\newcounter{refs}

\begin{list}{[\arabic{refs}]}%
{\usecounter{refs}\setlength{\leftmargin}{10mm}\setlength{\itemsep}{0mm}}

\item \label{bq}
J.E. Bonin, H. Qin,
Size functions of subgeometry-closed classes of
representable combinatorial geometries,
Discrete Math. 224, (2000) 37-60.

\item \label{bb}
R.C. Bose, R.C. Burton,
A characterization of flat spaces in a finite
geometry and the uniqueness of the Hamming
and MacDonald codes,
J. Combin. Theory 1, (1966) 96-104.

\item\label{cr}
H.H. Crapo, G.-C. Rota,
On the foundations of combinatorial theory:
Combinatorial geometries, M.I.T. Press, Cambridge, Mass., 1970.

\item \label{es}
P. Erd\H os, A.H. Stone,
On the structure of linear graphs,
Bull. Amer. Math. Soc. 52, (1946) 1087-1091.

\item \label{fur}
H. Furstenberg, Y. Katznelson,
An ergodic Szemeredi theorem for IP-systems and combinatorial theory,
Journal d'Analyse Math\' ematique 45, (1985) 117-168.

\item \label{fk}
H. Furstenberg, Y. Katznelson,
A density version of the Hales-Jewett Theorem,
J. d'Analyse Math\' ematique 57 (1991) 64-119.

% \item \label{ramseybook}
% R. Graham, B. Rothschild, J.H. Spencer,
% Ramsey Theory,
% John Wiley and Sons, NY, (1980).

\item\label{kung}
J.P.S. Kung,
Extremal matroid theory, in:
Graph Structure Theory, N. Robertson and P.D. Seymour, eds.
(Amer. Math. Soc., Providence RI, 1993), 21-61.

\item\label{polymath}
D.H.J. Polymath,
A new proof of the density Hales-Jewett theorem,
arXiv:0910.3926v2 [math.CO], (2010) 1-34.

% \item \label{oxley}
% J. G. Oxley,  {\em Matroid Theory},
% Oxford University Press, New York, 1992.

\end{list}

\end{document}